\newtheorem{thm}{Theorem}[section]
\newtheorem{lem}[thm]{Lemma}
\title{3-braid knots do not admit purely cosmetic surgeries}
\author{Konstantinos Varvarezos}
\begin{document}
\maketitle

\begin{abstract}
A pair of surgeries on a knot is called purely cosmetic if the pair of resulting 3-manifolds are homeomorphic as oriented manifolds.  Using recent work of Hanselman, we show that (nontrivial) knots which arise as the closure of a 3-stranded braid do not admit any purely cosmetic surgeries.
\end{abstract}


\section{Introduction}

Given a knot $K$ in $S^3$ and a pair of coprime integers $p,q$, we denote the \textit{Dehn surgery} on $K$ with slope $p/q$ by $S^3_{p/q}(K).$  Surgeries on $K$ along distinct slopes $r$ and $r'$ are called \textit{cosmetic} if $S^3_r(K)$ and $S^3_{r'}(K)$ are homeomorphic manifolds.  Furthermore, a pair of such surgeries is said to be \textit{purely cosmetic} if $S^3_r(K)$ and $S^3_{r'}(K)$ are homeomorphic as \textit{oriented} manifolds, whereas if they are homeomorphic but have opposite orientations, the pair of surgeries is called \textit{chirally cosmetic}.

No purely cosmetic surgeries are have been found on nontrivial knots in $S^3$; indeed, the \textit{Cosmetic Surgery Conjecture} predicts that none exist (compare also Problem 1.81(A) in \cite{KirbyList}).  On the other hand, there are examples of chirally cosmetic surgeries.  For instance, $S^3_r(K) \cong -S^3_{-r}(K)$ whenever $K$ is an amphicheiral knot.  Also, $(2,n)$-torus knots are known to admit chirally cosmetic surgeries; see \cite{IIS,Mat}.

In this work, we verify the Cosmetic Surgery Conjecture for 3-braid knots.
\begin{restatable}{thm}{main}\label{thm:main}
Suppose $K$ is a nontrivial knot which is the closure of a 3-braid.  Then $K$ admits no purely cosmetic surgeries.
\end{restatable}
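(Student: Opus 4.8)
The plan is to combine three known obstructions to purely cosmetic surgery with Murasugi's classification of closed $3$-braids, reducing Theorem~\ref{thm:main} to a finite check. Suppose $K=\widehat{\beta}$ is a nontrivial knot that is the closure of a $3$-braid $\beta$, and suppose $K$ admits a purely cosmetic surgery. I would invoke three necessary conditions in turn. First, by the theorem of Ni--Wu the two slopes must be $p/q$ and $-p/q$, and moreover $V_0$ of $K$ and of its mirror both vanish, whence $\tau(K)=0$. Second, by the theorem of Boyer--Lines, through the Casson--Walker invariant, the second coefficient $a_2(K)=\tfrac12\Delta_K''(1)$ of the Conway polynomial must vanish. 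Third -- and this is the point at which Hanselman's recent work is used -- one must have $g(K)\le 2$, with the slopes equal to $\pm 2$ only if $g(K)=2$, and otherwise of the form $\pm 1/q$ with further constraints. So it suffices to show that no nontrivial closed $3$-braid $K$ simultaneously satisfies $g(K)\le 2$, $\tau(K)=0$, and $a_2(K)=0$.

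To make the problem finite, I would write $\beta=h^{d}w$ in Murasugi's normal form, with $h=(\sigma_1\sigma_2)^3$ generating the centre of $B_3$ and $w$ ranging over Murasugi's short list of normal-form words. Applying the $\tau$-refinement of the slice--Bennequin inequality to the closed-braid diagram of $\beta$ and separately to that of its mirror yields
\[
\frac{e(\beta)-2}{2}\ \le\ \tau(K)\ \le\ \frac{e(\beta)+2}{2},
\]
where $e(\cdot)$ denotes the exponent sum; since $\tau(K)=0$, and $e(\beta)$ is even because $\widehat{\beta}$ is a knot on three strands, this forces $e(\beta)=6d+e(w)\in\{-2,0,2\}$. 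In Murasugi's ``generic'' family, $w=\prod_i\sigma_1^{a_i}\sigma_2^{-b_i}$ with $d=0$, the closed-braid diagram is reduced alternating with $\ell=\sum_i(a_i+b_i)$ crossings and three Seifert circles, so $g(K)=(\ell-2)/2$ by the Crowell--Murasugi formula; hence $g(K)\le 2$ gives $\ell\le 6$ and leaves finitely many such $w$. The torus-type words ($w=\sigma_1\sigma_2$ or $\sigma_1\sigma_2\sigma_1\sigma_2$) give torus knots $T(3,q)$, and the connected-sum words ($w=\sigma_1^{a}\sigma_2^{b}$) give $T(2,a)\#T(2,b)$, both bounded by the genus condition; and in the twisted cases $h^{\pm1}w$ the exponent-sum constraint forces $w$ to contain many crossings, while the $t$-span of $\Delta_K$ -- which satisfies $\operatorname{span}\Delta_K\le 2g(K)$ and, outside the periodic and reducible cases, grows with the word length of $\beta$ -- again bounds $g(K)$ from below, confining $\beta$ to finitely many conjugacy classes. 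The net result is an explicit finite list of low-crossing knots -- a few torus knots, the square knot $3_1\#\overline{3_1}$, and the knots $4_1$, $6_2$, $6_3$, $8_{20}$, $8_{21}$, and a small number of further examples -- which contains every nontrivial closed $3$-braid of genus at most $2$.

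It then remains to eliminate each knot on the list. The torus knots $T(2,3)$, $T(2,5)$, $T(3,4),\dots$, the unbalanced connected sums, and every other candidate with nonzero signature all have $\tau(K)\ne 0$ and are excluded by Ni--Wu. For each remaining knot, which satisfies $\tau(K)=0$, I would compute $a_2(K)=\tfrac12\Delta_K''(1)$ from the Alexander polynomial and check that it is nonzero -- for instance $a_2(4_1)=-1$, $a_2(6_3)=1$, $a_2(8_{20})=2$, and $a_2=2$ for the square knot -- so that Boyer--Lines applies. For any genus-$2$ knot that might survive with both $\tau(K)=0$ and $a_2(K)=0$, so that the only possible slopes are $\pm 2$, I would invoke the sharper form of Hanselman's theorem in that regime, which forces the leading Alexander coefficient to be $\pm 1$ and imposes a further vanishing condition on the knot Floer homology; a direct inspection of the finitely many candidates' knot Floer homologies shows it always fails, these knots being in any case small enough to fall within the range for which Hanselman has already verified the conjecture. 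This exhausts all cases and completes the proof.

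The main obstacle is the second step: extracting a genuinely finite -- and manageably short -- list of knots from Murasugi's classification. The torus-type and connected-sum cases are immediate, but controlling both the genus and $\tau$ uniformly across Murasugi's generic family and the twisted cases $h^{\pm 1}w$, so that the pair of conditions ``$g(K)\le 2$'' and ``$\tau(K)=0$'' provably reduces to finitely many knots rather than an uncontrolled infinite subfamily, is where the genuine bookkeeping lies. Once that list is in hand, the remainder is an elementary check of Alexander polynomials, together with at most a handful of knot Floer homology computations.
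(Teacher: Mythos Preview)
Your proposal rests on a misstatement of Hanselman's obstruction. Hanselman's theorem does \emph{not} assert that a knot admitting purely cosmetic surgeries has $g(K)\le 2$. What it says is that the slopes are $\{\pm 2\}$ (forcing $g(K)=2$) or $\{\pm 1/q\}$ with
\[
q\ \le\ \frac{th(K)+2g(K)}{2g(K)(g(K)-1)},
\]
where $th(K)$ is the Heegaard Floer thickness. Since $th(K)$ is not bounded a priori, this inequality places no universal bound on the genus; a knot of large genus with correspondingly large thickness is not excluded by Hanselman alone. Consequently your ``Step 3'' collapses, and with it the reduction to $g(K)\le 2$ on which the rest of the argument (the Murasugi case analysis and the finite check) is built.

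The paper closes exactly this gap by exploiting the $3$-braid hypothesis \emph{before} invoking Hanselman: via Bennequin's theorem and the band presentation it produces a diagram with at most $\tfrac{10}{3}(g(K)+1)$ crossings, hence $th(K)\le\tfrac{5}{3}(g(K)+1)$; plugging this into Hanselman's inequality forces the right-hand side below $1$ once $g(K)\ge 4$. The residual range is $g(K)\le 3$ (not $g\le 2$), which translates into crossing number at most $12$, and those knots are dispatched by Tao's theorem on connected sums, Ito's verification through $11$ crossings, and a direct $a_2$-check on three genus-$3$, $12$-crossing knots. Your Murasugi-classification route could in principle replace this last finite check, but it cannot replace the thickness bound: without some input tying $th(K)$ to $g(K)$ for $3$-braid closures, there is no way to get Hanselman's inequality to bite, and the reduction to finitely many knots never gets off the ground.
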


\subsection*{Acknowledgements}
The author would like to thank Professor Zolt\'{a}n Szab\'{o}  for suggesting work on this problem as well as many helpful conversations.  The author also thanks Professor Kazuhiro Ichihara for useful comments and corrections.  This work was supported by the NSF RTG grant DMS-1502424.

\section{Preliminaries}
\subsection{Band presentation and genus}
Here we recall some useful results about the genera of 3-braid knots.  We shall make use of the so-called ``band presentation" of the 3-braid group, studied for braid groups in general by Birman, Ko, Lee \cite{BKL} and which has proved useful in the study of surfaces bounded by braid knots; see, for instance, \cite{Rud,Xu,LL}.  For 3-braids, the presentation is as follows:
\[
\left\langle a_1,a_2,a_3 | a_2 a_1 = a_3 a_2 = a_1 a_3 \right\rangle
\]
Here $a_1$ and $a_2$ correspond to the standard Artin generators $\sigma_1$ and $\sigma_2$ respectively, and $a_3$ corresponds to  $\sigma_2 \sigma_1 \sigma_2^{-1}$; see Figure \ref{fig:bgens} for an illustration.

\begin{figure}
\centering
\includegraphics[width=.75\textwidth]{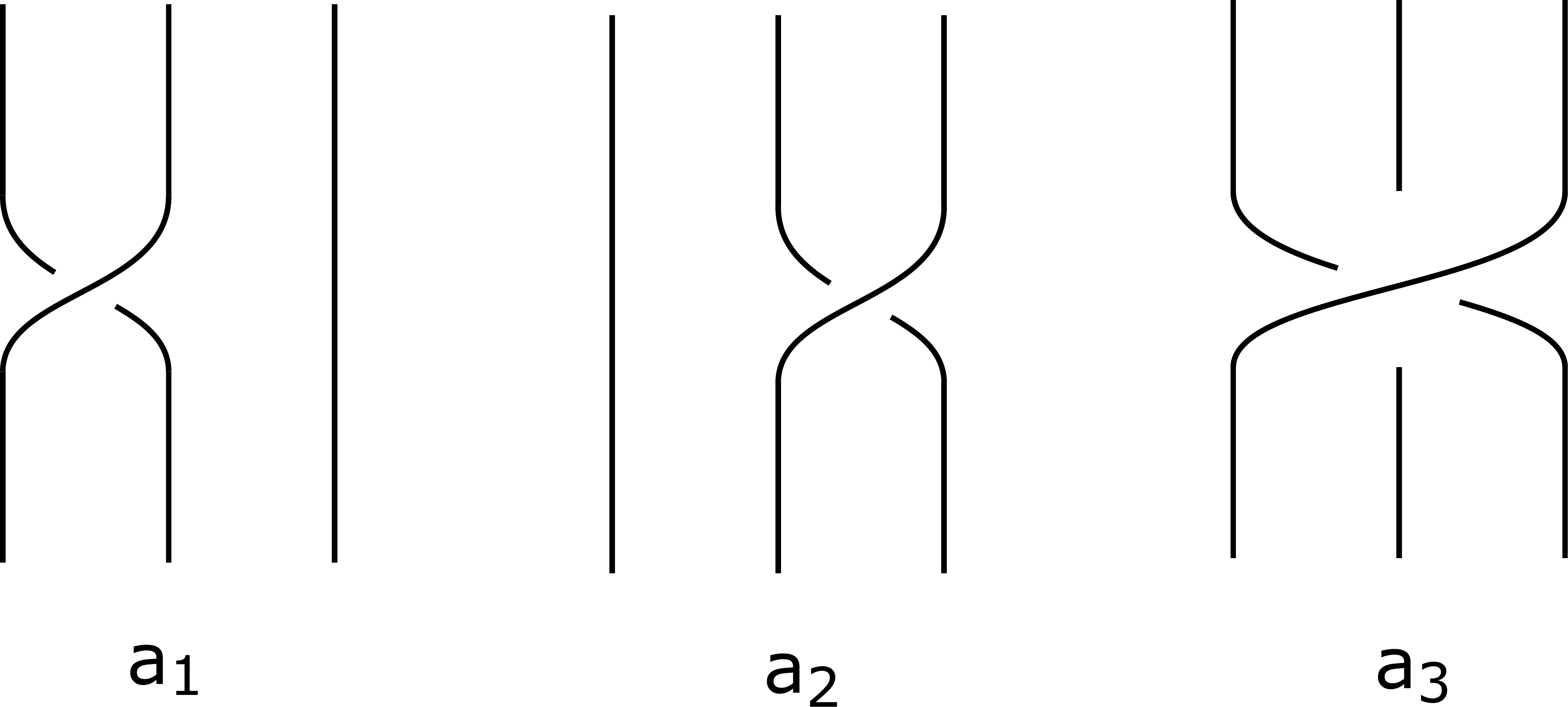}
\caption{The three generators of the band presentation of the 3-braid group.}	
\label{fig:bgens}
\end{figure}

A word in the band generators very naturally gives rise to a Seifert surface for the corresponding closed braid, which we shall call the \textit{banded surface} associated to the word.  In particular, starting with three disks, which one can imagine as being vertically ``stacked", one attaches a half-twisted band between two of the disks for each instance of $a_j^{\pm 1}$: for $a_1$ between the bottom and middle disks, for $a_2$ between the middle and top, and for $a_3$ between the top and bottom, twisted appropriately depending on the sign of the exponent; see Figure \ref{fig:surf} for the banded surface corresponding to the braid $a_2 a_3^{-1} a_1^2$.
\begin{figure}
\centering
\includegraphics[width=.5\textwidth]{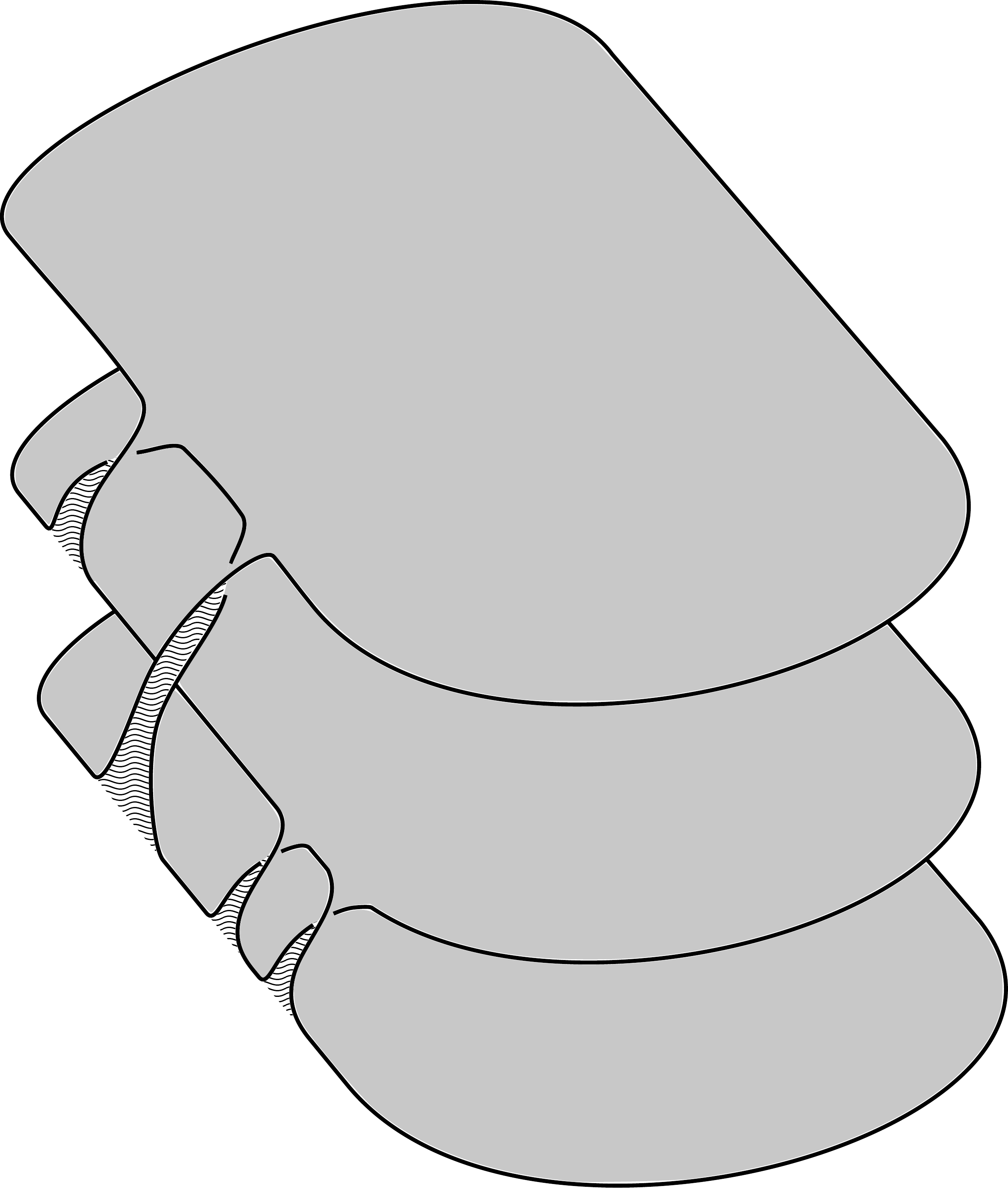}
\caption{A banded surface for the 3-braid knot corresponding to the closure of the braid $a_2 a_3^{-1} a_1^2$.}	
\label{fig:surf}
\end{figure}

Now suppose that a 3-braid in the band generators has a knot as its braid-closure.  A straightforward computation of the Euler characteristic reveals that the banded surface produced with the above method has genus
\begin{equation}\label{eq:gen}
g = \frac{\ell - 2}{2}
\end{equation}
where $\ell$ is the length of the braid word (that is, the number of instances of the $a_j^{\pm 1}$s).  Moreover, in the case of 3-braids, a theorem of Bennequin's says that a minimal Seifert surface can always be produced in this way (see also \cite{BM2}):
\begin{thm}[Proposition 3 of \cite{Ben}]
If $K$ is a the closure of a 3-braid, then the Seifert genus of K can be realized by a banded surface.
\end{thm}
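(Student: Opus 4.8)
The plan is to combine the band-generator (Birman--Ko--Lee) structure of the $3$-braid group with the Bennequin inequality, reducing the leftover cases to known genus computations for alternating links by way of Murasugi's classification of $3$-braids. Write $\delta := a_2 a_1 = a_3 a_2 = a_1 a_3$, a band word of length $2$ whose cube generates the center of the $3$-braid group (the full twist). Every $3$-braid is conjugate to a word $\delta^{r} P$ with $P$ a positive band word and $r \in \mathbb Z$, and after cyclic reduction we may assume that no application of the relations $a_2a_1=a_3a_2=a_1a_3$ shortens the word. Throughout, the Euler-characteristic count behind (\ref{eq:gen}) lets us move freely between band words and banded surfaces: a length-$\ell$ band word with knot closure has banded surface of genus $(\ell-2)/2$, so it is enough to produce, for each conjugacy class, a band word of length $2g(K)+2$.

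Suppose first that $K$ is the closure of a positive band word $w$ of length $\ell$, and let $\Sigma$ be the corresponding banded surface, a Seifert surface of genus $(\ell-2)/2$. Passing to Artin generators, each band $a_i$ contributes $+1$ to the exponent sum, so $w$, regarded as a braid on $3$ strands, has writhe $\ell$ and self-linking number $\mathrm{sl}=\ell-3$; Bennequin's inequality $\mathrm{sl}(K)\le 2g(K)-1$ gives $2g(K)\ge \ell-2$. Since $\Sigma$ has genus $(\ell-2)/2$, equality holds and $\Sigma$ realizes the Seifert genus. The same conclusion for knots arising as closures of negative band words follows at once, since $g(K)=g(\overline K)$ and the mirror of a banded surface is a banded surface.

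It remains to treat conjugacy classes not represented by any positive or negative band word. Here I would appeal to Murasugi's classification of $3$-braids: up to conjugacy and mirroring these are the ``alternating'' normal forms $\sigma_1^{-p_1}\sigma_2^{q_1}\cdots\sigma_1^{-p_k}\sigma_2^{q_k}$ with all $p_i,q_i\ge 1$, together with a short explicit list of exceptional families (essentially torus links and a handful of small braids). For an alternating normal form the word $a_1^{-p_1}a_2^{q_1}\cdots a_1^{-p_k}a_2^{q_k}$ is itself a band word, of length $\ell=\sum_i(p_i+q_i)$, and its closed-braid diagram is a reduced alternating diagram whose Seifert circles are the three braid strands; hence Seifert's algorithm applied to this diagram produces precisely the banded surface, and by Gabai's theorem on the genera of alternating links it is of minimal genus. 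The exceptional families are handled one at a time, matching (\ref{eq:gen}) against a lower bound on $g(K)$ from the Alexander polynomial or the signature.

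The main obstacle is the last paragraph: once positivity is lost, so is the clean Bennequin bound, and one must lean on the full classification of $3$-braids and grind through the mixed-sign words individually. An alternative more in the spirit of the banded-surface viewpoint --- and of Xu's work --- would be to argue directly that the banded surface of the cyclically reduced normal form is incompressible, by tracking how a hypothetical compressing disk could meet the bands; I expect that to be the most delicate ingredient of an elementary proof.
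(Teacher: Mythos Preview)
The paper does not prove this statement at all: it is quoted as Proposition~3 of \cite{Ben} and used as a black box, with no argument supplied. Bennequin's own proof is contact-geometric --- he studies the characteristic foliation that the standard contact structure on $S^3$ prints on an arbitrary Seifert surface of a closed $3$-braid and shows one can simplify it to a banded surface --- and makes no use of Murasugi's classification or of alternating diagrams. So there is nothing in the paper to compare your argument to; I can only assess the proposal on its own.

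Your treatment of closures of positive (hence, by mirroring, negative) band words is correct and is exactly the sharpness case of the Bennequin inequality. The gap, which you yourself flag, is the mixed-sign case, and it is a real one: your description of what remains after removing the positive and negative conjugacy classes is not accurate. Murasugi's normal forms are $h^{d}\,\sigma_1^{-p_1}\sigma_2^{q_1}\cdots\sigma_1^{-p_k}\sigma_2^{q_k}$ with $d\in\mathbb Z$ arbitrary, and for $d\neq 0$ the closed-braid diagram is \emph{not} alternating, so the Crowell--Murasugi/Gabai minimality of the Seifert-algorithm surface does not apply. These are infinite families, not ``a short explicit list of exceptional families,'' and for them neither the Bennequin bound nor alternating-diagram technology pins down the genus. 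The direction you gesture at in your last sentence --- Xu's shortest-band-word normal form and a direct incompressibility argument for the associated banded surface --- is in fact the standard modern route to this result (see \cite{Xu}, and also Birman--Menasco \cite{BM2}); but as written, the mixed case in your proposal is a genuine hole, not just a matter of grinding through finitely many leftovers.
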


Suppose $K$ is a knot which is the closure of a 3-braid expressed as a word in the band generators.  By Bennequin's theorem, we may take this word to be minimal so that the associated banded surface achieves the Seifert genus of the knot $g(K)$.  Notice that each instance of $a_1^{\pm 1}$ and $a_2^{\pm 1}$ contributes a crossing to the diagram of the braid, while each instance of $a_3^{\pm 1}$ contributes 3 crossings.  Let $A_j$ denote the total number of instances of $a_j^{\pm 1}$ in the given word for $K$.  Then $K$ admits a diagram with $A_1 + A_2 + 3A_3$ crossings.  Further, observe that $K$ is isotopic to the closure of the braid obtained by cyclically permuting the generators: $a_1 \rightarrow a_2 \rightarrow a_3 \rightarrow a_1$ (indeed, one can even realize this as an isotopy of a banded surface like the one in Figure \ref{fig:surf} by ``flipping" the top disk across the banded areas so that it becomes the bottom disk).  Thus, we may assume that $A_3$ is the least of the $A_j$s; that is, $A_3 \leq \frac{1}{3}(A_1+A_2+A_3)$.  On the other hand, we have that the length of the word $\ell = A_1+A_2+A_3$.  Combining this with equation \eqref{eq:gen}, we see that $K$ admits a diagram with crossing number:
\begin{align*}
c &= A_1 + A_2 + 3A_3 \\
&= \ell + 2A_3 \\
&\leq \ell +\frac{2}{3}\ell \\
&=\frac{10}{3}(g(K)+1).
\end{align*}
Therefore, we have shown the following lemma:
\begin{lem}\label{lem:gen}
Let $K$ be a 3-braid knot. If $g(K)$ denotes the Seifert genus of $K$, then $K$ admits a diagram with no more than $\frac{10}{3}(g(K)+1)$ crossings.
\end{lem}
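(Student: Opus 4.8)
The plan is to combine Bennequin's theorem with a direct crossing count of the braid diagram underlying a banded surface. First I would invoke the theorem of Bennequin quoted above to fix a word $w$ in the band generators $a_1, a_2, a_3$ whose braid closure is $K$ and whose associated banded surface realizes the Seifert genus $g(K)$; since this surface is minimal, we may take $w$ to have minimal length $\ell$ among such words, and then equation \eqref{eq:gen} gives $\ell = 2g(K) + 2$.

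Next I would set up the crossing count from the standard braid diagram. Each occurrence of $a_1^{\pm 1}$ or $a_2^{\pm 1}$ — these being the Artin generators $\sigma_1^{\pm 1}$ and $\sigma_2^{\pm 1}$ — contributes a single crossing, whereas each occurrence of $a_3^{\pm 1}$ — corresponding to $\sigma_2 \sigma_1 \sigma_2^{-1}$ up to sign — contributes three crossings. Writing $A_j$ for the number of occurrences of $a_j^{\pm 1}$ in $w$, this yields a diagram of $K$ with $c = A_1 + A_2 + 3A_3$ crossings, while $\ell = A_1 + A_2 + A_3$, so that $c = \ell + 2A_3$.

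The key reduction is then to arrange that $A_3$ is the smallest of the three counts. For this I would use the observation that cyclically permuting the band generators $a_1 \to a_2 \to a_3 \to a_1$ produces a braid with isotopic closure; as noted, this can even be realized by an isotopy of the banded surface (sliding the top disk around to become the bottom disk), so minimality of the word, and hence the relation $\ell = 2g(K)+2$, is preserved. Applying this symmetry if necessary, we may assume $A_3 \le \tfrac{1}{3}(A_1 + A_2 + A_3) = \tfrac{\ell}{3}$. Substituting into $c = \ell + 2A_3$ gives $c \le \ell + \tfrac{2}{3}\ell = \tfrac{5}{3}\ell = \tfrac{10}{3}(g(K)+1)$, which is the claimed bound.

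Since the whole argument is essentially laid out in the discussion preceding the lemma, there is no serious obstacle. The only points requiring a little care are verifying that each $a_3^{\pm 1}$ costs exactly three crossings in the associated diagram (rather than some number depending on neighbouring letters) and that the cyclic permutation is a genuine isotopy of the closed braid and not merely a homeomorphism — both of which are transparent from the explicit banded-surface picture in Figure \ref{fig:surf}.
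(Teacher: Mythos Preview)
Your proposal is correct and follows exactly the paper's own argument: invoke Bennequin's theorem to obtain a minimal-length band word, count crossings as $c = \ell + 2A_3$, cyclically permute the band generators so that $A_3 \le \ell/3$, and conclude $c \le \tfrac{5}{3}\ell = \tfrac{10}{3}(g(K)+1)$. There is nothing to add.
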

\begin{proof}[Remark.]\let \qed \relax
In fact, we may additionally require that $K$ admit a diagram with an \textit{even} number of crossings less than or equal to $\frac{10}{3}(g(K)+1)$.  This follows from the fact that, in order for the closure of a 3-braid to result in a knot, the permutation of the strands induced by the braid must be even (indeed it must be one of the cycles of order 3 in $S_3$, the symmetric group on the three strands).
\end{proof}
\subsection{Knot Floer Homology and thickness}
Knot Floer Homology is a knot invariant introduced by Ozsv\'{a}th and Szab\'{o} \cite{OSzHFK} and also independently by Rasmussen \cite{Ras}.  It is related to Heegaard Floer Homology, a 3-manifold invariant also developed by Ozsv\'{a}th and Szab\'{o}, and it has been used with much success to obtain general obstructions for cosmetic surgeries; see, for instance, \cite{Wang,OSz,NW}.  In this work, we shall make use of obstructions coming from recent work of Hanselman \cite{Hans}.

Given a knot $K$, we consider the so-called ``hat" version of Knot Floer Homology, denoted $\widehat{HFK}(K)$, which is the homology of a bigraded chain complex denoted $\widehat{CFK}(K)$.  The two gradings are called the Alexander and Maslow gradings.  In an algebraic re-interpretation of Knot Floer Homology, Ozsv\'{a}th and Szab\'{o} showed \cite{OSzBord,OSzAlg} that the generators of $\widehat{CFK}(K)$ may be identified with so-called Kauffman states of a diagram for $K$.  A Kauffman state consists of a choice of one of the four ``corners" for each crossing appearing in the diagram for $K$, such that all but two (chosen once and for all) of the ``regions" that make up the complement of the diagram have exactly one marking (the two ``distinguished" regions never receive any markings).  We denote the set of all Kauffman states by $\mathfrak{S}$ (note that this depends on the choice of diagram for $K$).  The Alexander and Maslow grading of each such generator is the sum of the local grading contributions at each crossing, as shown in Figure \ref{fig:grad}.  The \textit{delta} grading of a generator of $\widehat{CFK}(K)$ is defined to be the difference between its Alexander and Maslow gradings; hence, it too may be computed as a sum of local delta-gradings, also displayed in Figure \ref{fig:grad}.

\begin{figure}
\centering
\includegraphics[width=.75\textwidth]{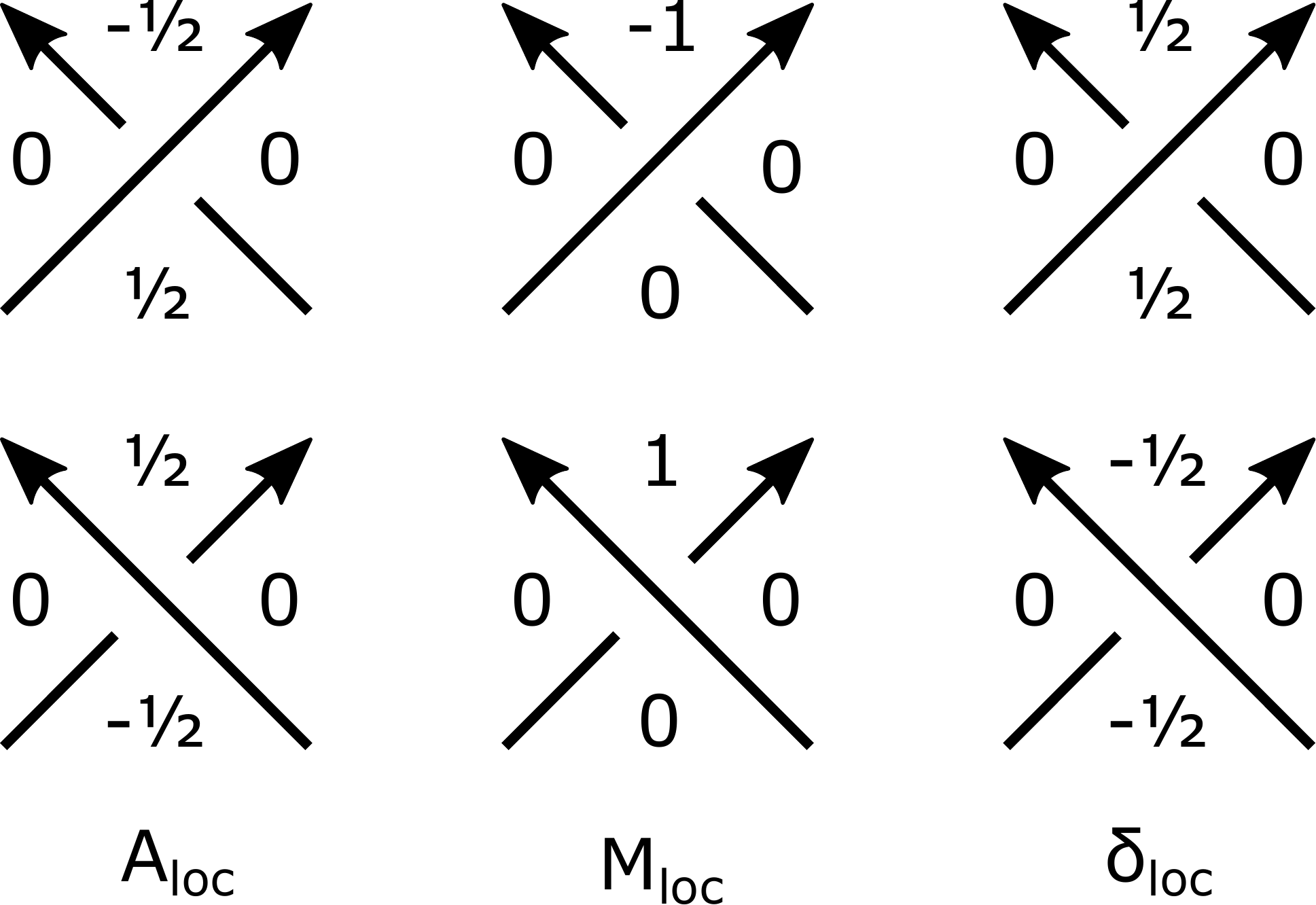}
\caption{The local Alexander, Maslow, and delta grading contributions at a crossing.  The top row shows a positive crossing and the bottom shows a negative crossing.}	
\label{fig:grad}
\end{figure}
The (homological) \textit{thickness} of a knot $K$, which we shall denote $th(K)$, is defined to be the difference between the maximal and minimal delta gradings of the support of $\widehat{HFK}(K)$.  Using the Kauffaman state interpretation, we obtain the following bound on the thickness:
\begin{align*}
th(K) &\leq \max_{s \in \mathfrak{S}} \delta(s) - \min_{s \in \mathfrak{S}} \delta(s) \\
&= \max_{s \in \mathfrak{S}} \sum_{c \in s} \delta_{\mathrm{loc}}(c) - \min_{s \in \mathfrak{S}} \sum_{c \in s} \delta_{\mathrm{loc}}(c) \\
&\leq \sum_{\substack{c \in s \\ \text{positive}}} \frac{1}{2} -  \sum_{\substack{c \in s \\ \text{negative}}} -\frac{1}{2} \\
&= \frac{1}{2}(n^+ + n^-)
\end{align*}
where a sum over $c \in s$ means summing over all the (marked) crossings $c$ of the Kauffman state $s$, and where $n^+$ and $n^-$ denote the number of positive and negative crossings appearing in the diagram, respectively.  Hence, the thickness of a knot $K$ is at most one-half the number of crossings appearing in any diagram for $K$.  Combining this with Lemma \ref{lem:gen}, we obtain:
\begin{lem}\label{lem:gth}
If $K$ is a knot which is the closure of a 3-braid, then
\[
th(K) \leq \frac{5}{3}(g(K)+1)
\]
\end{lem}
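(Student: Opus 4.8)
The plan is to simply compose the two ingredients already assembled in this section, so the proof is essentially one line of arithmetic. First I would invoke the Kauffman-state estimate derived in the paragraph above: for \emph{any} connected diagram $D$ of $K$ with $c(D)$ crossings, the delta grading of every generator of $\widehat{CFK}(K)$ is a sum of local contributions of $\pm\tfrac12$ over the marked crossings, so the spread of delta gradings over all Kauffman states — which dominates $th(K)$ — is at most $\tfrac12(n^+ + n^-) = \tfrac12 c(D)$. Hence $th(K) \leq \tfrac12 c(D)$ for any such $D$.

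Second, I would apply Lemma~\ref{lem:gen}, which produces a diagram of $K$ with at most $\tfrac{10}{3}\bigl(g(K)+1\bigr)$ crossings, obtained from a minimal-genus band-word representative with $A_3$ taken to be the smallest of the $A_j$. Feeding this diagram into the inequality $th(K) \leq \tfrac12 c(D)$ gives
\[
th(K) \leq \frac12 \cdot \frac{10}{3}\bigl(g(K)+1\bigr) = \frac{5}{3}\bigl(g(K)+1\bigr),
\]
which is exactly the assertion of the lemma.

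I do not anticipate a genuine obstacle here, since both inputs are already in hand; the only point deserving a sentence of care is that the Kauffman-state bound must be applied to the \emph{specific} braid-closure diagram furnished by Lemma~\ref{lem:gen}, which is legitimate because the Kauffman-state model of $\widehat{CFK}$ is valid for any connected diagram, not just a generic one. One could, if desired, shave the constant slightly by using the parity remark following Lemma~\ref{lem:gen} (that an \emph{even} crossing count $\le \tfrac{10}{3}(g(K)+1)$ is attainable), but this refinement is unnecessary for the applications that follow.
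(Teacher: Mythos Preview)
Your proposal is correct and follows exactly the paper's approach: the paper likewise obtains Lemma~\ref{lem:gth} by combining the Kauffman-state bound $th(K)\le \tfrac12(n^++n^-)$ with the crossing-number bound of Lemma~\ref{lem:gen}. One small imprecision: the local $\delta$-contributions are not all $\pm\tfrac12$ (some are $0$), but your stated inequality $th(K)\le \tfrac12\,c(D)$ is the correct consequence and is precisely what the paper uses.
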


\section{Proof of Theorem \ref{thm:main}}
We are now ready to prove our main result.  We shall make use of the following obstruction obtained by Hanselman for the existence of purely cosmetic surgeries:
\begin{thm}[Theorem 2 of \cite{Hans}]\label{thm:Hans}
Let $K \subset S^3$ be a nontrivial knot and suppose that $S_r^3$ and $S^3_{r'}$ are homeomorphic as oriented manifolds for $r,r'$ distinct rational numbers.  Then $\lbrace r,r'\rbrace = \lbrace \pm 2\rbrace \text{ or } \lbrace \pm \frac{1}{q} \rbrace$ for some positive integer $q$.  Moreover:
\begin{enumerate}[label=\roman*)]
\item If $\lbrace r,r'\rbrace = \lbrace \pm 2\rbrace$ then $g(K)=2$
\item If $\lbrace r,r'\rbrace = \lbrace \pm \frac{1}{q} \rbrace$ then $\displaystyle q\leq \frac{th(K)+2g(K)}{2g(K)(g(K)-1)}$ \label{ineq}
\end{enumerate}
\end{thm}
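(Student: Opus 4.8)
We prove Theorem~\ref{thm:Hans} by treating it as a consequence of the immersed-curve reformulation of knot Floer homology, bootstrapped from the classical slope restrictions. First I would record the constraints that are already available: combining the Casson--Walker surgery formula (following Boyer--Lines) with the symmetry of the Heegaard Floer correction terms under orientation reversal (Ni--Wu), an orientation-preserving homeomorphism $S^3_r(K)\cong S^3_{r'}(K)$ with $r\neq r'$ forces $r'=-r$; writing $r=p/q$ in lowest terms with $p>0$, one also obtains $q^2\equiv -1\pmod p$ together with $\Delta_K''(1)=0$. Thus it remains only to show that $p\in\{1,2\}$, that $p=2$ forces $g(K)=2$, and that $p=1$ forces the quantitative bound on $q$.

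For the main input I would pass to the immersed multicurve $\gamma(K)$ representing $\widehat{CFK}(K)$ in the marked torus $\partial\,\nu(K)$, as developed by Hanselman--Rasmussen--Watson. The two features I would exploit are: (i) the surgery formula, which identifies the graded group $\widehat{HF}(S^3_{p/q}(K))$, with its $\mathrm{Spin}^c$-decomposition and $d$-invariants, with the graded minimal intersection of $\gamma(K)$ against a line $\ell_{p/q}$ of slope $p/q$; and (ii) the geometry of $\gamma(K)$, namely that it is confined to the horizontal strip of height $2g(K)$ in the lift, because $\widehat{HFK}$ is supported in Alexander gradings $[-g,g]$ and detects the genus, while its ``turning'' or horizontal doubling-back is controlled by the thickness $th(K)$. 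Crucially, $\gamma(K)$ is symmetric only under the central involution $(x,y)\mapsto(-x,-y)$, which \emph{preserves} the slope of a line; it is not reflection-symmetric unless $K$ is amphicheiral. Hence the lines $\ell_{p/q}$ and $\ell_{-p/q}$, which are reflections of one another, need not meet $\gamma(K)$ in the same way, and this asymmetry is exactly the obstruction to exploit.

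With this set up, the heart of the argument is a counting estimate. In the homology-sphere case $p=1$ the line $\ell_{\pm 1/q}$ is nearly horizontal and, on the torus, sweeps across the height-$2g$ strip roughly $2gq$ times; I would compute the graded intersection numbers against $\ell_{1/q}$ and $\ell_{-1/q}$ and show that their discrepancy grows linearly in $q$, with a leading coefficient measuring the reflection-asymmetry of $\gamma(K)$. When this coefficient is nonzero I expect it to be bounded below by an area-type quantity of size $2g(g-1)$, the minimal signed area swept by a nontrivial component spanning the full height $2g$, while the bounded correction terms are controlled by the total complexity $th(K)+2g(K)$. Since an orientation-preserving homeomorphism forces the two graded intersection patterns to agree, the discrepancy must vanish, which is possible only if $q\le \frac{th(K)+2g(K)}{2g(K)(g(K)-1)}$. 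The same computation, applied instead to the steep lines $\ell_{\pm p/q}$, rules out $p\ge 3$ and shows that the borderline case $p=2$ can close up only when $\gamma(K)$ has vertical extent exactly $4$, i.e. $g(K)=2$.

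The step I expect to be the main obstacle is precisely this graded intersection count: extracting the exact leading coefficient $2g(g-1)$ and the bounded budget $th+2g$ requires careful bookkeeping of how each component of $\gamma(K)$, the distinguished strand together with the interior closed components forced by nonzero thickness, meets the sweeping family of lines, keeping track of Maslov gradings rather than mere cardinalities, since total rank alone is insufficient to obstruct an oriented homeomorphism. Subsidiary care is also needed at the degenerate end: when $g(K)=1$ the denominator vanishes and the stated bound is vacuous, so genus-one and the finitely many small-$q$ cases must be excluded separately, and one must verify that the reflection-asymmetry coefficient indeed fails to vanish for every nontrivial knot surviving the earlier reductions.
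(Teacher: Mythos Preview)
The paper does not prove Theorem~\ref{thm:Hans} at all: it is quoted verbatim as Theorem~2 of Hanselman's paper \cite{Hans} and used as a black box, with only the one-line Remark noting that if the right-hand side of the inequality in item~\ref{ineq} is less than~$1$ and $g(K)>2$, then $K$ admits no purely cosmetic surgeries. There is therefore nothing in the paper to compare your proposal against; the paper's contribution is Lemmas~\ref{lem:gen} and~\ref{lem:gth} together with the short case analysis in the proof of Theorem~\ref{thm:main}, not a reproof of Hanselman's obstruction.

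That said, your sketch is broadly in the spirit of Hanselman's actual argument, which does proceed via the immersed-curve invariant $\gamma(K)$ and an intersection count against lines of slope $\pm p/q$. But as written it is a heuristic outline rather than a proof: the phrases ``I expect it to be bounded below by an area-type quantity of size $2g(g-1)$'' and ``the bounded correction terms are controlled by the total complexity $th(K)+2g(K)$'' are exactly the steps where the real work lies, and you have not carried them out. In Hanselman's proof the constant $2g(g-1)$ arises from a specific count involving the distinguished $\gamma_0$ component and its slope near the extremal Alexander gradings, and the thickness enters through a bound on how far the horizontal components can extend; your proposal gestures at these ingredients but does not supply the combinatorial identities that make the inequality sharp. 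If your aim was to reproduce the paper's proof of this statement, the correct answer is that there is none to reproduce; if your aim was to reprove Hanselman's theorem, you would need to fill in the graded intersection computation you yourself flag as the main obstacle.
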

\begin{proof}[Remark.]\let \qed \relax Notice that if for a knot $K$ with genus greater than 2 the fraction on the right hand side of the inequality in \ref{ineq} is less than one, then $K$ admits no purely comsetic surgeries.
\end{proof}

\main*
\begin{proof}
First, consider a 3-braid knot $K$ with genus $g(K)=g \geq 4$.  By Lemma \ref{lem:gth}, $th(K)\leq \frac{5}{3}(g+1)$.  Hence,
\begin{align*}
\frac{th(K)+2g(K)}{2g(K)(g(K)-1)} &\leq \frac{\frac{11}{3}g +\frac{5}{3}}{2g(g-1)} \\
&= \frac{1}{g-1}\left(\frac{11}{6}+\frac{5}{6g}\right) \\
&\leq \frac{1}{3}\left(\frac{11}{6}+\frac{5}{24}\right) = \frac{49}{72} < 1
\end{align*}
where the hypothesis that $K$ has genus at least 4 was used in the last step.  By the remark following Theorem \ref{thm:Hans}, $K$ admits no purely cosmetic surgeries.

So we are left with the case $g(K) \leq 3$.  By Lemma \ref{lem:gen} along with the remark following it, any such knot has crossing number at most 12.   As composite knots were shown not to admit any purely cosmetic surgeries by Tao \cite{Tao}, we may restrict our attention to prime knots.  On the other hand, if $g(K) \leq 2$, it has crossing number at most 10.  In \cite{Ito}, Ito showed that the Cosmetic Surgery Conjecture holds for all prime knots of crossing number at most 11, with the possible exception of $10_118$.  According to KnotInfo \cite{KI}, this knot is a 3-braid knot of genus 4, and so admits no purely cosmetic surgeries by the above computation.  Hence, the remaining unchecked knots have genus 3 and crossing number 12.  A search through KnotInfo reveals three knots satisfying these conditions (with bridge index 3).  KnotInfo also gives the Alexander polynomials of these knots, from which we may compute the invariant $a_2(K) = \frac{1}{2}\Delta_K''(1)$, where $\Delta_K(t)$ denotes the symmetrized Alexander polynomial of $K$.  The results are shown in the following table:

\begin{center}
\begin{tabular}{|c|c|c|}
\hline
$K$ & $\Delta_K(t)$ & $a_2(K)$ \\
\hline
$12n749$ & $t^{-3} - t^{-2}+t^{-1}-1+t-t^2+t^3$ & 6 \\
\hline
$12n750$ & $2 t^{-2}-5t^{-1}+7-5t+2t^2$ & 3 \\
\hline
$12n830$ & $2 t^{-2}-4t^{-1}+5-4t+2t^2$ & 4 \\
\hline
\end{tabular}
\end{center}
By a result of Boyer and Lines \cite{BL}, if a knot $K$ admits purely cosmetic surgeries then $a(K)=0$.  Hence the three remaining knots are ruled out as well.
\end{proof}

\printbibliography

\end{document}